\documentclass[a4paper,11pt, reqno]{amsart}

\usepackage{tikz}
\usepackage{subfigure}
\usepackage[final]{microtype}
\usepackage{lmodern}

\usepackage{amsmath, amssymb, amsfonts, amsthm}
\usepackage[latin1]{inputenc}
\usepackage[T1]{fontenc}

\usepackage[draft=false,pdftex]{hyperref}
\hypersetup{
    bookmarks=true,         
    unicode=false,          
    pdftoolbar=true,        
    pdfmenubar=true,        
    pdffitwindow=false,     
    pdfstartview={FitH},    
    pdftitle={Bounding the number of edges of matchstick graphs},    
    pdfauthor={Jérémy Lavollée and Konrad J. Swanepoel},     
    pdfnewwindow=false,      
    colorlinks=true,       
    linkcolor=black,          
    citecolor=black,        
    filecolor=black,      
    urlcolor=black           
}

\theoremstyle{plain}
\newtheorem{theorem}{Theorem}
\newtheorem{lemma}[theorem]{Lemma}
\newtheorem{corollary}[theorem]{Corollary}
\newtheorem{conjecture}{Conjecture}

\theoremstyle{remark}
\newtheorem{remark}[theorem]{Remark}

\definecolor{konrad}{rgb}{0,0,1}
\definecolor{jeremy}{rgb}{1,0,0}

\renewcommand{\geq}{\geqslant}
\renewcommand{\leq}{\leqslant}

\begin{document}

\title{Bounding the number of edges of matchstick graphs}
\author{J\'er\'emy Lavoll\'ee}
\address[J. Lavoll\'ee, K. J. Swanepoel]{Department of Mathematics, The London School of Economics and Political Science, Houghton Street, London WC2A 2AE}

\author{Konrad J. Swanepoel}

\date{}
\subjclass[2020]{Primary 52C10. Secondary 05C10}
\keywords{matchstick graph, penny graph, plane unit-distance graph}

\begin{abstract}
We show that a matchstick graph with $n$ vertices has no more than $3n-c\sqrt{n-1/4}$ edges, where $c=\frac12(\sqrt{12} + \sqrt{2\pi\sqrt{3}})$.
The main tools in the proof are the Euler formula, the isoperimetric inequality, and an upper bound for the number of edges in terms of $n$ and the number of non-triangular faces.
We also find a sharp upper bound for the number of triangular faces in a matchstick graph.
\end{abstract}

\maketitle

\section{Introduction}
Matchstick graphs, first introduced by Harborth in 1981 \cite{oberwolfach, lighter}, are graphs drawn in the plane with each edge a straight-line segment of unit length, such that no two edges have a point in common, unless the common point is an endpoint of both edges (Figure~\ref{fig2}).
\begin{figure}[h]
    \centering
    \scalebox{0.7}{\begin{tikzpicture}[line cap=round,line join=round,x=1cm,y=1cm]
\draw[line width=1pt] (-14.08,10.24) -- (-12.0903890116217,10.036411898863616) -- (-12.908882038318504,11.86125960901608) -- cycle;
\draw[line width=1pt] (-10.544833985948046,11.305765932462956) -- (-11.631628826631452,9.626814779432014) -- (-9.63421705605178,9.525098415213794) -- cycle;
\draw[line width=1pt] (-13.177183852305077,8.357460745832702) -- (-11.631628826631426,9.626814779432042) -- (-11.305113499974977,7.65364784745228) -- cycle;
\draw[line width=1pt] (-14.08,10.24) -- (-12.0903890116217,10.036411898863616) -- (-12.908882038318504,11.86125960901608) -- cycle;
\draw[line width=1pt] (-19.531096688913404,10.716135043247027) -- (-17.53197954300168,10.775554206163957) -- (-18.58299662051521,12.477130858206086) -- cycle;
\draw[line width=1pt] (-18.58299662051521,12.477130858206086) -- (-17.53197954300168,10.775554206163957) -- (-16.583879474603485,12.536550021123016) -- cycle;
\draw[line width=1pt] (-17.556084482653166,10.400973520971455) -- (-19.531096688913404,10.716135043247027) -- (-18.816528470369306,8.848143538703523) -- cycle;
\draw[line width=1pt] (-17.556084482653166,10.400973520971455) -- (-18.816528470369306,8.848143538703523) -- (-16.841516264109064,8.532982016427953) -- cycle;
\draw[line width=1pt] (-16.583879474603485,12.536550021123016) -- (-16.712697869356276,10.534766018775485) -- (-14.914692873057632,11.4240980176186) -- cycle;
\draw[line width=1pt] (-14.914692873057632,11.4240980176186) -- (-16.712697869356276,10.534766018775485) -- (-15.043511267810423,9.422314015271072) -- cycle;
\draw[line width=1pt] (-16.712697869356276,10.534766018775485) -- (-16.841516264109064,8.532982016427953) -- (-15.043511267810416,9.42231401527107) -- cycle;
\draw [line width=1pt] (-14.08,10.24)-- (-12.0903890116217,10.036411898863616);
\draw [line width=1pt] (-14.08,10.24)-- (-12.0903890116217,10.036411898863616);
\draw [line width=1pt] (-12.0903890116217,10.036411898863616)-- (-12.908882038318504,11.86125960901608);
\draw [line width=1pt] (-12.908882038318504,11.86125960901608)-- (-14.08,10.24);
\draw [line width=1pt] (-12.0903890116217,10.036411898863616)-- (-10.544833985948046,11.305765932462956);
\draw [line width=1pt] (-10.544833985948046,11.305765932462956)-- (-11.631628826631452,9.626814779432014);
\draw [line width=1pt] (-10.544833985948046,11.305765932462956)-- (-11.631628826631452,9.626814779432014);
\draw [line width=1pt] (-11.631628826631452,9.626814779432014)-- (-9.63421705605178,9.525098415213794);
\draw [line width=1pt] (-9.63421705605178,9.525098415213794)-- (-10.544833985948046,11.305765932462956);
\draw [line width=1pt] (-12.090389011621674,10.036411898863646)-- (-13.177183852305077,8.357460745832702);
\draw [line width=1pt] (-13.177183852305077,8.357460745832702)-- (-15.08113842418279,8.969798061554977);
\draw [line width=1pt] (-15.08113842418279,8.969798061554977)-- (-13.08113842418279,8.969798061554977);
\draw [line width=1pt] (-13.08113842418279,8.969798061554977)-- (-14.67378218931946,10.179544248724536);
\draw [line width=1pt] (-14.08,10.24)-- (-12.0903890116217,10.036411898863616);
\draw [line width=1pt] (-14.08,10.24)-- (-12.0903890116217,10.036411898863616);
\draw [line width=1pt] (-12.0903890116217,10.036411898863616)-- (-12.908882038318504,11.86125960901608);
\draw [line width=1pt] (-12.908882038318504,11.86125960901608)-- (-14.08,10.24);
\draw [line width=1pt] (-19.531096688913404,10.716135043247027)-- (-17.53197954300168,10.775554206163957);
\draw [line width=1pt] (-17.53197954300168,10.775554206163957)-- (-18.58299662051521,12.477130858206086);
\draw [line width=1pt] (-18.58299662051521,12.477130858206086)-- (-19.531096688913404,10.716135043247027);
\draw [line width=1pt] (-18.58299662051521,12.477130858206086)-- (-17.53197954300168,10.775554206163957);
\draw [line width=1pt] (-17.53197954300168,10.775554206163957)-- (-16.583879474603485,12.536550021123016);
\draw [line width=1pt] (-16.583879474603485,12.536550021123016)-- (-18.58299662051521,12.477130858206086);
\draw [line width=1pt] (-17.556084482653166,10.400973520971455)-- (-19.531096688913404,10.716135043247027);
\draw [line width=1pt] (-19.531096688913404,10.716135043247027)-- (-18.816528470369306,8.848143538703523);
\draw [line width=1pt] (-18.816528470369306,8.848143538703523)-- (-17.556084482653166,10.400973520971455);
\draw [line width=1pt] (-17.556084482653166,10.400973520971455)-- (-18.816528470369306,8.848143538703523);
\draw [line width=1pt] (-18.816528470369306,8.848143538703523)-- (-16.841516264109064,8.532982016427953);
\draw [line width=1pt] (-16.841516264109064,8.532982016427953)-- (-17.556084482653166,10.400973520971455);
\draw [line width=1pt] (-16.712697869356276,10.534766018775485)-- (-14.914692873057632,11.4240980176186);
\draw [line width=1pt] (-14.914692873057632,11.4240980176186)-- (-16.583879474603485,12.536550021123016);
\draw [line width=1pt] (-16.712697869356276,10.534766018775485)-- (-15.043511267810423,9.422314015271072);
\draw [line width=1pt] (-15.043511267810423,9.422314015271072)-- (-14.914692873057632,11.4240980176186);
\draw [line width=1pt] (-16.841516264109064,8.532982016427953)-- (-15.043511267810416,9.42231401527107);
\draw [line width=1pt] (-15.043511267810416,9.42231401527107)-- (-16.712697869356276,10.534766018775485);
\begin{scriptsize}
\draw [fill=black] (-14.08,10.24) circle (1.5pt);
\draw [fill=black] (-12.0903890116217,10.036411898863616) circle (1.5pt);
\draw [fill=black] (-12.908882038318504,11.86125960901608) circle (1.5pt);
\draw [fill=black] (-10.544833985948046,11.305765932462956) circle (1.5pt);
\draw [fill=black] (-11.631628826631452,9.626814779432014) circle (1.5pt);
\draw [fill=black] (-9.63421705605178,9.525098415213794) circle (1.5pt);
\draw [fill=black] (-12.090389011621674,10.036411898863646) circle (1.5pt);
\draw [fill=black] (-13.177183852305077,8.357460745832702) circle (1.5pt);
\draw [fill=black] (-15.08113842418279,8.969798061554977) circle (1.5pt);
\draw [fill=black] (-13.08113842418279,8.969798061554977) circle (1.5pt);
\draw [fill=black] (-14.67378218931946,10.179544248724536) circle (1.5pt);
\draw [fill=black] (-11.305113499974977,7.65364784745228) circle (1.5pt);
\draw [fill=black] (-14.08,10.24) circle (1.5pt);
\draw [fill=black] (-12.0903890116217,10.036411898863616) circle (1.5pt);
\draw [fill=black] (-12.908882038318504,11.86125960901608) circle (1.5pt);
\draw [fill=black] (-12.0903890116217,10.036411898863616) circle (1.5pt);
\draw [fill=black] (-12.908882038318504,11.86125960901608) circle (1.5pt);
\draw [fill=black] (-19.531096688913404,10.716135043247027) circle (1.5pt);
\draw [fill=black] (-17.53197954300168,10.775554206163957) circle (1.5pt);
\draw [fill=black] (-18.58299662051521,12.477130858206086) circle (1.5pt);
\draw [fill=black] (-16.583879474603485,12.536550021123016) circle (1.5pt);
\draw [fill=black] (-17.556084482653166,10.400973520971455) circle (1.5pt);
\draw [fill=black] (-18.816528470369306,8.848143538703523) circle (1.5pt);
\draw [fill=black] (-16.841516264109064,8.532982016427953) circle (1.5pt);
\draw [fill=black] (-16.712697869356276,10.534766018775485) circle (1.5pt);
\draw [fill=black] (-14.914692873057632,11.4240980176186) circle (1.5pt);
\draw [fill=black] (-15.043511267810423,9.422314015271072) circle (1.5pt);
\draw [fill=black] (-15.043511267810416,9.42231401527107) circle (2pt);
\end{scriptsize}
\end{tikzpicture}}
    \caption{A (disconnected) matchstick graph}
    \label{fig2}
\end{figure}
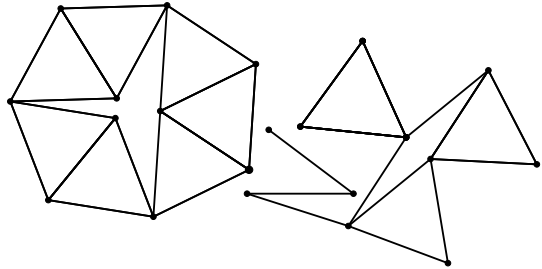
Harborth posed various problems about matchstick graphs.
The one that drew most attention in the literature is that of finding $k$-regular matchstick graphs with the smallest number of vertices.
For example, it is known that there are no $5$-regular matchstick graphs \cite{blokhuis, kurz}, and the currently smallest known $4$-regular matchstick graph, described in \cite{lighter} and known as the Harborth graph, has $52$ vertices and $104$ edges.
Another of Harborth's problems in \cite{lighter} is to find the maximum number of edges in a matchstick graph on $n$ vertices, for which he conjectured the following.
\begin{conjecture}[Harborth \cite{lighter}]\label{conj1}
For each $n\geq 1$, the maximum number of edges in a matchstick graph on $n$ vertices is $3n-\lceil\!\sqrt{12n-3}\,\rceil$.
\end{conjecture}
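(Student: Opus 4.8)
The plan is to establish the matching upper bound $e \le 3n-\lceil\sqrt{12n-3}\,\rceil$; the reverse inequality is provided by a construction, namely the largest hexagonal patch of the triangular lattice that fits into $n$ vertices together with a greedy filling of the remainder, so I concentrate on the inequality. First I would reduce to the essential case. A disconnected or merely $1$-connected example can be improved by merging components or pieces at a cut vertex, so I would take an extremal example (maximum number of edges) and argue it may be assumed $2$-connected. Writing $b$ for the length of the boundary cycle of the outer face, and letting the bounded faces consist of $t$ triangles together with faces of at least $4$ sides, Euler's formula combined with the incidence count $\sum_F|F| = 2e$ gives, in the all-triangle case, the clean identity
\begin{equation}
3n - e = b + 3. \label{eq:clean}
\end{equation}
A bounded face with more than three sides only decreases $e$ relative to \eqref{eq:clean} and hence can only help the desired inequality; this is precisely the content of the edge bound in terms of the number of non-triangular faces mentioned in the abstract, and it lets me assume every bounded face is a unit equilateral triangle.

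The heart of the matter is then a \emph{sharp discrete isoperimetric inequality}: a $2$-connected edge-to-edge arrangement of $t$ unit equilateral triangles bounded by a cycle of length $b$ and using $n$ vertices must satisfy $b \ge \sqrt{12n-3}-3$, with the integer refinement $b+3 \ge \lceil\sqrt{12n-3}\,\rceil$. Applying the continuous isoperimetric inequality $b^2 \ge 4\pi A$ to the enclosed area $A = t\tfrac{\sqrt3}{4}$, and substituting $t = 2n-2-b$ from \eqref{eq:clean}, yields only $b \gtrsim \sqrt{2\pi\sqrt3}\,\sqrt{n}$, whereas the conjecture needs the strictly larger constant $b \gtrsim \sqrt{12}\,\sqrt{n}$. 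The gap is exactly that the extremal shape is a lattice hexagon, not a disc: its isoperimetric ratio is $8\sqrt3 > 4\pi$. So the real task is to replace the round isoperimetric inequality by a combinatorial one that knows the angles are multiples of $60^\circ$. I would attempt this through a discrete Gauss--Bonnet / angle-defect count: at each boundary vertex the interior angle is a sum of the $60^\circ$ triangle angles meeting there, the exterior turning angles sum to $360^\circ$, and bounding the number of boundary vertices against the enclosed triangle count should force the hexagonal constant.

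I expect the main obstacle to be precisely this sharp, lattice-aware isoperimetric step, for two reasons. First, a matchstick graph need not lie on the triangular lattice even when all bounded faces are unit triangles: edge-to-edge gluings of equilateral triangles can ``bend'' or carry disclination-type defects, so the convenient assertion ``every interior angle is a multiple of $60^\circ$'' can fail globally, and one must rule out or pay for such non-lattice configurations using planarity and the no-crossing condition. Second, the \emph{exact} formula with its ceiling demands more than the leading asymptotic: one must identify the optimal near-hexagonal shapes for every $n$ (hexagons are perfect only when $12n-3$ is a perfect square) and prove a discrete isoperimetric inequality that is tight to the integer for all intermediate values of $n$. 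It is this combination --- upgrading the constant from $4\pi$ to $8\sqrt3$ while simultaneously controlling off-lattice triangle arrangements and achieving integer sharpness --- that the continuous method underlying the weaker bound in the abstract cannot reach, and closing it is where essentially all of the difficulty resides.
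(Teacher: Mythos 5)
You set out to prove Conjecture~\ref{conj1}, but this statement is \emph{open}: the paper does not prove it, and there is no proof of it in the paper to compare yours against. The paper's actual results are strictly weaker --- Theorem~\ref{thm1} gives $e\leq 3n-\sqrt{12n-3}+g$, and Theorem~\ref{thm3} gives $e\leq 3n-c\sqrt{n-1/4}$ with $c=\frac12\bigl(\sqrt{12}+\sqrt{2\pi\sqrt{3}}\bigr)\approx 3.3815$, which falls short of the conjectured constant $\sqrt{12}\approx 3.4641$; the conjecture is thereby settled only for the finite list of values of $n$ given after Theorem~\ref{thm3}. Your proposal accurately reproduces this framework (the identity $3n-e=b+3$ in the all-triangle case follows from Lemma~\ref{euler_double}, the substitution $t=2n-2-b$ is right, and your diagnosis of the constant gap $4\pi$ versus $8\sqrt{3}$ is exactly why the continuous isoperimetric inequality cannot reach the conjecture), but at the decisive moment --- the sharp, lattice-aware isoperimetric inequality $b\geq\sqrt{12n-3}-3$ for $2$-connected arrangements of unit triangles, with integer sharpness for every $n$ and control of off-lattice ``bent'' gluings --- you say only that you \emph{would attempt} a discrete Gauss--Bonnet argument and then list the obstacles. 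That step is not a deferrable lemma; for matchstick graphs (as opposed to penny graphs, where Harborth proved it) it \emph{is} the conjecture, so what you have is a research plan identifying the difficulty, not a proof.

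There is also a concrete error earlier, in your reduction to the all-triangle case. You claim a bounded face with more than three sides ``can only help the desired inequality'' and that this ``is precisely the content of the edge bound\dots mentioned in the abstract.'' It is not: that bound is $e\leq 3n-\sqrt{12n-3}+g$, which gets \emph{weaker} as $g$ grows --- the induction in the paper's proof of Theorem~\ref{thm1} has to pay $+1$ for each non-triangular face, and the paper explicitly remarks the bound does not seem sharp when $g>0$ --- so it cannot be invoked to assume $g=0$. In the identity $e=3n-3-b-\sum_{i\geq 4}(i-3)f_i$ of Lemma~\ref{euler_double}, a non-triangular face does add at least $1$ to the deficit, but it may simultaneously permit a shorter boundary cycle $b$, and there is no lower bound on its area (a unit rhombus can be arbitrarily thin --- the paper's closing remark about triangle-free matchstick graphs makes exactly this point), so the isoperimetric side of your argument collapses precisely when $g>0$. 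A genuine proof must confront this trade-off rather than assume it away, in addition to closing the main isoperimetric gap you correctly identified.
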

In \cite{harborth} Harborth proved that this is indeed the maximum number of edges if we furthermore assume that any two vertices are separated by a distance of at least $1$ (the so-called penny graphs).
This maximum is attained by an appropriately chosen set of $n$ points on the triangular lattice (Figure~\ref{fig1}).
\begin{figure}
    \centering
    \begin{tikzpicture}[line cap=round,line join=round,x=1cm,y=1cm]
\clip(7,-0.5) rectangle (12.5,4);
\draw [line width=0.8pt] (8.660254037844386,0)-- (9.660254037844386,0);
\draw [line width=0.8pt] (9.660254037844386,0)-- (10.660254037844386,0);
\draw [line width=0.8pt] (10.660254037844386,0)-- (11.160254037844386,0.8660254037844384);
\draw [line width=0.8pt] (11.160254037844386,0.8660254037844384)-- (11.660254037844386,1.7320508075688774);
\draw [line width=0.8pt] (11.660254037844386,1.7320508075688774)-- (11.160254037844386,2.598076211353316);
\draw [line width=0.8pt] (11.160254037844386,2.598076211353316)-- (10.660254037844386,3.464101615137755);
\draw [line width=0.8pt] (10.660254037844386,3.464101615137755)-- (9.660254037844386,3.4641016151377544);
\draw [line width=0.8pt] (9.660254037844386,3.4641016151377544)-- (8.660254037844387,3.4641016151377553);
\draw [line width=0.8pt] (8.660254037844387,3.4641016151377553)-- (8.160254037844386,2.5980762113533165);
\draw [line width=0.8pt] (8.160254037844386,2.5980762113533165)-- (7.6602540378443855,1.7320508075688774);
\draw [line width=0.8pt] (7.6602540378443855,1.7320508075688774)-- (8.160254037844386,0.8660254037844387);
\draw [line width=0.8pt] (8.160254037844386,0.8660254037844387)-- (8.660254037844386,0);
\draw [line width=0.8pt] (8.660254037844386,0)-- (9.160254037844386,0.8660254037844389);
\draw [line width=0.8pt] (9.160254037844386,0.8660254037844389)-- (8.160254037844386,0.8660254037844387);
\draw [line width=0.8pt] (8.160254037844386,0.8660254037844387)-- (8.660254037844386,1.7320508075688776);
\draw [line width=0.8pt] (8.660254037844386,1.7320508075688776)-- (7.6602540378443855,1.7320508075688774);
\draw [line width=0.8pt] (8.160254037844386,2.5980762113533165)-- (8.660254037844386,1.7320508075688776);
\draw [line width=0.8pt] (8.660254037844386,1.7320508075688776)-- (9.160254037844387,2.5980762113533165);
\draw [line width=0.8pt] (8.160254037844386,2.5980762113533165)-- (9.160254037844387,2.5980762113533165);
\draw [line width=0.8pt] (9.160254037844387,2.5980762113533165)-- (8.660254037844387,3.4641016151377553);
\draw [line width=0.8pt] (8.660254037844387,3.4641016151377553)-- (9.660254037844386,3.4641016151377544);
\draw [line width=0.8pt] (9.660254037844386,3.4641016151377544)-- (9.160254037844387,2.5980762113533165);
\draw [line width=0.8pt] (9.160254037844387,2.5980762113533165)-- (10.160254037844386,2.598076211353316);
\draw [line width=0.8pt] (10.160254037844386,2.598076211353316)-- (9.660254037844386,3.4641016151377544);
\draw [line width=0.8pt] (10.660254037844386,3.464101615137755)-- (10.160254037844386,2.598076211353316);
\draw [line width=0.8pt] (11.160254037844386,2.598076211353316)-- (10.160254037844386,2.598076211353316);
\draw [line width=0.8pt] (10.660254037844386,1.7320508075688772)-- (10.160254037844386,2.598076211353316);
\draw [line width=0.8pt] (10.160254037844386,2.598076211353316)-- (9.660254037844387,1.7320508075688779);
\draw [line width=0.8pt] (9.160254037844387,2.5980762113533165)-- (9.660254037844387,1.7320508075688779);
\draw [line width=0.8pt] (9.660254037844387,1.7320508075688779)-- (8.660254037844386,1.7320508075688776);
\draw [line width=0.8pt] (8.660254037844386,1.7320508075688776)-- (9.160254037844386,0.8660254037844389);
\draw [line width=0.8pt] (9.160254037844386,0.8660254037844389)-- (9.660254037844387,1.7320508075688779);
\draw [line width=0.8pt] (9.660254037844387,1.7320508075688779)-- (10.160254037844387,0.8660254037844392);
\draw [line width=0.8pt] (10.160254037844387,0.8660254037844392)-- (9.160254037844386,0.8660254037844389);
\draw [line width=0.8pt] (9.160254037844386,0.8660254037844389)-- (9.660254037844386,0);
\draw [line width=0.8pt] (9.660254037844386,0)-- (10.160254037844387,0.8660254037844392);
\draw [line width=0.8pt] (11.160254037844386,0.8660254037844384)-- (10.160254037844387,0.8660254037844392);
\draw [line width=0.8pt] (10.160254037844387,0.8660254037844392)-- (10.660254037844386,1.7320508075688772);
\draw [line width=0.8pt] (10.660254037844386,1.7320508075688772)-- (11.160254037844386,0.8660254037844384);
\draw [line width=0.8pt] (11.160254037844386,0.8660254037844384)-- (11.660254037844386,1.7320508075688774);
\draw [line width=0.8pt] (11.660254037844386,1.7320508075688774)-- (10.660254037844386,1.7320508075688772);
\draw [line width=0.8pt] (10.660254037844386,1.7320508075688772)-- (11.160254037844386,2.598076211353316);
\draw [line width=0.8pt] (9.660254037844387,1.7320508075688779)-- (10.660254037844386,1.7320508075688772);
\draw [line width=0.8pt] (10.160254037844387,0.8660254037844392)-- (10.660254037844386,0);
\begin{scriptsize}
\draw [black!50!white] (8.660254037844386,0) circle (0.5);
\draw [fill=black] (8.660254037844386,0) circle (1pt);
\draw [black!50!white] (9.660254037844386,0) circle (0.5);
\draw [fill=black] (9.660254037844386,0) circle (1pt);
\draw [black!50!white] (10.660254037844386,0) circle (0.5);
\draw [fill=black] (10.660254037844386,0) circle (1pt);

\draw [black!50!white] (8.160254037844386,0.8660254037844387) circle (0.5);
\draw [fill=black] (8.160254037844386,0.8660254037844387) circle (1pt);
\draw [black!50!white] (9.160254037844386,0.8660254037844389) circle (0.5);
\draw [fill=black] (9.160254037844386,0.8660254037844389) circle (1pt);
\draw [black!50!white] (10.160254037844387,0.8660254037844392) circle (0.5);
\draw [fill=black] (10.160254037844387,0.8660254037844392) circle (1pt);
\draw [black!50!white] (11.160254037844386,0.8660254037844384) circle (0.5);
\draw [fill=black] (11.160254037844386,0.8660254037844384) circle (1pt);

\draw [black!50!white] (7.6602540378443855,1.7320508075688774) circle (0.5);
\draw [fill=black] (7.6602540378443855,1.7320508075688774) circle (1pt);
\draw [black!50!white] (8.660254037844386,1.7320508075688776) circle (0.5);
\draw [fill=black] (8.660254037844386,1.7320508075688776) circle (1pt);
\draw [black!50!white] (9.660254037844387,1.7320508075688779) circle (0.5);
\draw [fill=black] (9.660254037844387,1.7320508075688779) circle (1pt);
\draw [black!50!white] (10.660254037844386,1.7320508075688772) circle (0.5);
\draw [fill=black] (10.660254037844386,1.7320508075688772) circle (1pt);
\draw [black!50!white] (11.660254037844386,1.7320508075688774) circle (0.5);
\draw [fill=black] (11.660254037844386,1.7320508075688774) circle (1pt);

\draw [black!50!white] (8.160254037844386,2.5980762113533165) circle (0.5);
\draw [fill=black] (8.160254037844386,2.5980762113533165) circle (1pt);
\draw [black!50!white] (9.160254037844387,2.5980762113533165) circle (0.5);
\draw [fill=black] (9.160254037844387,2.5980762113533165) circle (1pt);
\draw [black!50!white] (10.160254037844386,2.598076211353316) circle (0.5);
\draw [fill=black] (10.160254037844386,2.598076211353316) circle (1pt);
\draw [black!50!white] (11.160254037844386,2.598076211353316) circle (0.5);
\draw [fill=black] (11.160254037844386,2.598076211353316) circle (1pt);

\draw [black!50!white] (8.660254037844387,3.4641016151377553) circle (0.5);
\draw [fill=black] (8.660254037844387,3.4641016151377553) circle (1pt);
\draw [black!50!white] (9.660254037844386,3.4641016151377544) circle (0.5);
\draw [fill=black] (9.660254037844386,3.4641016151377544) circle (1pt);
\draw [black!50!white] (10.660254037844386,3.464101615137755) circle (0.5);
\draw [fill=black] (10.660254037844386,3.464101615137755) circle (1pt);
\end{scriptsize}
\end{tikzpicture}
    \caption{A penny graph attaining equality in Conjecture~\ref{conj1}}
    \label{fig1}
\end{figure}
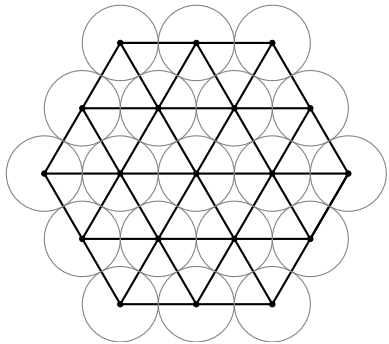
In \cite[p.~225]{research} this problem is again mentioned, where it is stated that it ``seems very likely that the maximum number of edges in a crossing-free unit-distance graph is again $\lfloor 3n-\sqrt{12n-3}\rfloor$\dots''
It is not hard to show from the Euler formula and the isoperimetric inequality that the number of edges is at most $3n-\sqrt{\rule[-0.1em]{0pt}{1em}\smash{2\pi\sqrt{\smash[b]{3}}\cdot n}}+O(1)$.
Our main result is the following improvement.
\begin{theorem}\label{thm3}
In a matchstick graph with $n$ vertices, the number $e$ of edges satisfies
\[\textstyle e\leq 3n - c\sqrt{n-1/4},\]
where $c=\frac12\Bigl(\!\sqrt{12}+\sqrt{\rule[-0.05em]{0pt}{0.9em}\smash{2\pi\sqrt{\smash[b]{3}}}}\Bigr)\approx 3.3815\dots$.
\end{theorem}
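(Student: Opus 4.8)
The plan is to fix a plane drawing of the graph and first reduce to a $2$-connected matchstick graph, the general case following by treating blocks and connected components separately and combining the individual deficiencies through the superadditivity $\sqrt{a}+\sqrt{b}\ge\sqrt{a+b}$ of the square root, with the few degenerate small components (isolated vertices, single edges) checked by hand. For a $2$-connected drawing every face is bounded by a cycle, so I can combine Euler's formula $n-e+f=2$ with the handshake identity $\sum_{\phi}|\phi|=2e$, where $|\phi|$ is the length of the face $\phi$. Summing $|\phi|-3$ over the non-triangular faces gives the promised expression of the edge count in terms of $n$ and the non-triangular faces,
\[3n-e=6+\sum_{\phi\text{ non-triangular}}\bigl(|\phi|-3\bigr).\]
Since each triangular face is a unit equilateral triangle all of whose angles equal $\pi/3$, comparing the total angle around interior vertices (each $2\pi$) with the total exterior angle of the outer boundary (which is $2\pi$) turns this into the two facts I will actually use: writing $b$ for the number of edges on the outer boundary, $D=\sum(|\phi|-3)$ and $D'=\sum(|\phi|-2)$ over the \emph{bounded} non-triangular faces, and $t$ for the number of triangular faces,
\[3n-e=b+3+D\ge b+3 \qquad\text{and}\qquad t=2n-b-2-D'.\]

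The second ingredient is the isoperimetric inequality. The $t$ triangular faces are disjoint unit equilateral triangles of total area $t\sqrt3/4$ lying inside the region enclosed by the outer boundary, a closed polygonal curve of length $b$, so $t\sqrt3/4\le b^2/(4\pi)$, that is $t\le b^2/(\pi\sqrt3)$. Feeding this into the identity $t=2n-b-2-D'$ bounds $D'$ from below, and since $|\phi|-3\ge\tfrac12(|\phi|-2)$ for every non-triangular face we have $D\ge D'/2$; together with $3n-e\ge b+3$ and an optimisation over $b$ (completing the square, which is where the $\sqrt{n-1/4}$ shape originates) this already yields $3n-e\ge\sqrt{2\pi\sqrt3\,(n-1/4)}+O(1)$. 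This reproduces the weaker constant $\sqrt{2\pi\sqrt3}\approx3.30$ mentioned in the introduction, and the entire difficulty is to raise it to the average $c=\tfrac12(\sqrt{12}+\sqrt{2\pi\sqrt3})$.

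To gain the missing amount I would play the isoperimetric estimate against a purely combinatorial one. When the drawing is \emph{flat} — every interior vertex surrounded by exactly six triangles — the triangulated region is a piece of the triangular lattice, i.e.\ a polyiamond, for which the sharp discrete isoperimetric inequality $b^2\ge 6t$ holds; with $t=2n-b-2$ this gives precisely $(b+3)^2\ge12n-3$, hence $3n-e\ge\sqrt{12(n-1/4)}$, the conjectured bound. A matchstick graph can only undercut the polyiamond bound by inserting non-triangular faces, which let the lattice orientation rotate and the region curve toward the isoperimetrically better circle. The inequality to isolate is therefore a \emph{defect} form of the polyiamond bound in which $b^2$ falls short of $6t$ by an amount controlled by $D'$, while dually the continuous estimate $t\le b^2/(\pi\sqrt3)$ is improved by the curvature that the non-triangular faces carry. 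Because these two corrections enter with opposite signs, adding the two resulting lower bounds for $3n-e$ cancels the dependence on the amount of curvature and leaves
\[2\,(3n-e)\ge\sqrt{12(n-1/4)}+\sqrt{2\pi\sqrt3\,(n-1/4)},\]
which is the theorem; read through $t=2n-b-2-D'$ the same computation delivers the sharp bound on the number of triangular faces announced in the abstract.

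The main obstacle is exactly this last step: establishing a curvature-sensitive isoperimetric inequality for regions glued from unit equilateral triangles and non-triangular faces, quantifying how much extra boundary each unit of non-flatness (each non-triangular face, weighted by $|\phi|-2$) forces, and verifying that the discrete and the continuous corrections are commensurate enough to cancel on addition. The remaining work is technical: passing from $2$-connected to arbitrary matchstick graphs (cut vertices, bridges, and an outer boundary traversed more than once), disposing of the small cases where the outer face is itself a triangle, and tracking the additive constants carefully enough to obtain the clean $\sqrt{n-1/4}$ rather than $\sqrt{n}+O(1)$.
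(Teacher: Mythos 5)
Your high-level architecture is exactly the paper's: obtain two lower bounds for the deficiency $3n-e$, one discrete with constant $\sqrt{12}$ and a correction $+g$ ($g$ = number of bounded non-triangular faces), one continuous-isoperimetric with constant $\sqrt{2\pi\sqrt{3}}$ and correction $-g$, and average them so the corrections cancel. Your identities are also correct: $3n-e=b+3+D$ and $f_3=2n-2-b-D'$ follow from Euler plus double counting, and indeed the discrete inequality you want, ``$b^2$ falls short of $6f_3$ by an amount controlled by $D'$'', is (after algebra) equivalent to $(b+D')^2\geq 6f_3$, i.e.\ to the paper's Theorem~\ref{thm1}, $e\leq 3n-\sqrt{12n-3}+g$. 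Two corrections to your picture of the continuous side, though: no curvature-improved isoperimetric inequality is needed there. The $-g$ correction is already present in $e\leq 3n-3-b-g$ (Lemma~\ref{euler_double}), and the plain isoperimetric inequality applied to the outer boundary ($b^2>\pi\sqrt{3}f_3$, since the enclosed region contains $f_3$ disjoint unit triangles) suffices to force $b\geq\sqrt{2\pi\sqrt{3}(n-1/4)}-3$ under the assumption that the theorem fails. All of the difficulty is concentrated on the discrete side.

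And that is where your proposal has a genuine gap: the ``defect form of the polyiamond bound'' is precisely the hard combinatorial content, and you leave it as an acknowledged obstacle rather than proving it. In the paper this is Theorem~\ref{thm1}, whose proof occupies most of the work: a Harborth-style induction that peels off the outer boundary cycle, with separate reductions for cut vertices, for chords of the boundary cycle, and for non-triangular faces meeting the boundary in two non-adjacent vertices (each reduction splitting $G$ into two pieces and invoking Lemma~\ref{sqrt2}), plus an angle-counting argument showing that each remaining boundary-touching non-triangular face contributes at most one interior angle below $60^{\circ}$, which is what bounds the number of edges deleted when the boundary is removed. None of this is routine --- the paper even notes that the no-chord hypothesis was overlooked in Harborth's original argument --- so the proposal cannot be considered a proof without it. A smaller but real flaw: superadditivity $\sqrt{a}+\sqrt{b}\geq\sqrt{a+b}$ is too weak for your block decomposition, since for two blocks sharing a cut vertex you need $c\bigl(\sqrt{n_1-1/4}+\sqrt{n_2-1/4}\,\bigr)\geq 3+c\sqrt{n-1/4}$ with $n_1+n_2=n+1$, and $\sqrt{n+1/2}-\sqrt{n-1/4}\to 0$; you must exploit $n_1,n_2\geq 2$ via the sharper majorization inequality (Lemma~\ref{sqrt} in the paper).
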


As a calculation shows, this bound turns out to be strong enough that it settles Conjecture~\ref{conj1} for all $n\leq 14$, as well as for
\begin{align*}
n&=16, 18, 19, 20, 21, 23, 24, 26, 27, 29, 30, 32, 33, 36, 37, 39, 40,\\
& 43, 44, 47, 48, 51, 52, 55, 56, 60, 61, 65, 69, 70, 74, 75, 79, 80, 85,\\
& 90, 91, 96, 102, 108, 114, 120, 127.
\end{align*}
%
The proof of Theorem~\ref{thm3} uses the isoperimetric inequality (Lemma~\ref{isoperimetric} below), as well as the following result which bounds the number of edges in terms of the number of bounded non-triangular faces of the matchstick graph.
Its proof is based on Harborth's induction proof \cite{harborth} of Conjecture~\ref{conj1} for penny graphs.
\begin{theorem}\label{thm1}
In a matchstick graph with $n$ vertices, $e$ edges, and $g$ bounded non-triangular faces, we have
\[e \leq 3n-\sqrt{12n-3}+g.\]
\end{theorem}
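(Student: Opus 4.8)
The plan is to induct on the number of edges, after two reductions. First I would reduce to connected graphs: if $G$ is the disjoint union of $G_1$ and $G_2$ with $n_i$ vertices, then since each factor $12n_i-3$ is at least $9$ we have $(12n_1-3)(12n_2-3)\geq 81$, which gives $\sqrt{12n_1-3}+\sqrt{12n_2-3}\geq\sqrt{12(n_1+n_2)-3}$, so the bound for $G$ follows by adding the bounds for $G_1$ and $G_2$; isolated vertices and single edges are checked directly. So assume $G$ is connected. Second, I would use Euler's formula to recast the statement. Writing $t$ for the number of bounded triangular faces, a connected matchstick graph satisfies $e=n+t+g-1$, so the claimed inequality is equivalent to the bound $t\leq 2n+1-\sqrt{12n-3}$ on the number of triangular faces. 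The geometric input I would record here is that every triangular face, having three unit sides, is equilateral; consequently an interior vertex whose incident faces are all triangles has its $360^\circ$ made up of $60^\circ$ angles and hence has degree exactly $6$. In particular, when $g=0$ the graph is a simply connected region of the triangular lattice, i.e.\ a penny graph, and the inequality is exactly Harborth's theorem \cite{harborth}; this is the base case.

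For the inductive step I would first dispose of the cases in which a single edge deletion lowers $g$. If two non-triangular bounded faces share an edge $uv$, or if a non-triangular bounded face has an edge $uv$ on the outer boundary, then deleting $uv$ produces a matchstick graph $G'$ with the same $n$, with $e-1$ edges, and with $g-1$ non-triangular bounded faces (two such faces merge into one, respectively one is absorbed into the unbounded face). Applying the inductive hypothesis to $G'$, and passing to its components if the deletion disconnects it, gives $e-1\leq 3n-\sqrt{12n-3}+(g-1)$, which is exactly what we want.

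The remaining, and genuinely hard, case is when every non-triangular bounded face is interior and surrounded entirely by triangles, with no two non-triangular faces adjacent. Here no edge deletion reduces $g$, and the non-triangular faces act as holes around which the graph may fold, so it need not sit on the triangular lattice. In this configuration I would peel the outer boundary following Harborth's induction: removing a boundary vertex $v$ changes $3n-e+g$ by exactly $a-2$, where $a$ is the number of triangular faces at $v$, while $\sqrt{12n-3}$ decreases by only about $\sqrt{3/n}$. I would therefore remove not a single vertex but a whole boundary path chosen so that the total of $a-2$ over its vertices is negative enough to absorb the change in the square root, using the equilateral structure to control how many triangles can meet at the convex corners of the boundary.

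The main obstacle is exactly this last step: the square-root term decreases continuously while the combinatorics changes in integer jumps, and the extremal hexagonal lattice region (where every boundary vertex carries $a=2$ or $a=3$ triangles) shows that no single vertex can be removed without violating the inductive inequality. The crux is thus to identify the right boundary segment to peel and to bound the triangle count along it, together with the routine but delicate handling of cut vertices, bridges, and faces whose boundary walks are not simple cycles.
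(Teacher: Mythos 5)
Your preliminary reductions are sound and partly different from the paper's: the disjoint-union case via $\sqrt{12n_1-3}+\sqrt{12n_2-3}\geq\sqrt{12n-3}$ (valid since $(12n_1-3)(12n_2-3)\geq 81$) is a clean alternative to the paper's device of sliding components together to create new edges, and the observation that deleting an edge shared by two non-triangular faces, or by a non-triangular face and the outer face, drops $e$ and $g$ by one each is a correct induction step (such an edge is never a bridge, so connectivity is preserved). Two smaller issues there: you need $g\geq g_1+g_2$ for the disjoint union, which requires a word about a component sitting inside a bounded face of the other; and the claim that $g=0$ forces a penny graph on the triangular lattice is false without $2$-connectivity (two triangles sharing a single vertex at an arbitrary angle already refute it), so your ``base case'' needs the cut-vertex reduction first, which you have not set up.

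The genuine gap is the one you flag yourself: the case where every non-triangular face is interior and edge-surrounded by triangles is the heart of the proof, and your plan --- peel a single boundary vertex, or an unspecified boundary path, and track the change $a-2$ in $3n-e+g$ --- is not carried out and, as stated, cannot be: on the extremal lattice hexagon every choice of vertex or short path loses against the continuous decrease of $\sqrt{12n-3}$. The paper's resolution is to remove the \emph{entire} boundary cycle at once. To make that work it first performs two further surgeries you do not have: if the boundary cycle has a chord, or if a bounded non-triangular face meets the boundary in two non-adjacent vertices, the graph is split into two subgraphs sharing exactly two vertices, and a separate integer inequality (Lemma~\ref{sqrt2}, comparing $\lfloor 3n_i-\sqrt{12n_i-3}\rfloor$ for $n_1+n_2=n+2$) closes the induction. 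After these reductions the number of edges removed with the boundary is exactly $\sum_{i\geq 2}(i-1)b_i$, and an angle count --- the interior angles of the boundary polygon sum to $180^\circ(b-2)$, each face angle at a boundary vertex is at least $60^\circ$ except for at most one per boundary non-triangular face --- gives $\sum_{i\geq2}(i-1)b_i\leq 3b-6+g_b$. The induction then reduces to $\sqrt{12n-3}\leq\sqrt{12(n-b)-3}+6$, which holds when $b\leq\sqrt{12n-3}-3$, while larger $b$ is handled directly by $e\leq 3n-b-3$ from Lemma~\ref{euler_double}. Without the chord/non-adjacent-vertex decompositions, the accompanying floor-function inequality, and the whole-boundary angle argument, the induction does not close, so the proposal as written does not constitute a proof.
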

We do not assume that the graph is $2$-connected or even connected in Theorem~\ref{thm1}.
This is not just for the sake of generality, as this general statement is needed when the induction hypothesis is applied in the proof.

Even though Theorem~\ref{thm1} does not seem to be sharp if there are bounded non-triangular faces ($g > 0$), when combined with the Euler formula, it gives the following sharp upper bound for the number of triangular faces of a matchstick graph.
\begin{corollary}\label{cor1}
In a matchstick graph with $n$ vertices, the number 
of bounded triangular faces is at most $2n+1-\sqrt{12n-3}$.
\end{corollary}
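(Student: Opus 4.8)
The plan is to combine Euler's formula with Theorem~\ref{thm1} applied to each connected component. Write $t$ for the number of bounded triangular faces and $g$ for the number of bounded non-triangular faces of the matchstick graph $G$, and suppose first that $G$ is connected with $n$ vertices and $e$ edges. Euler's formula $n-e+f=2$, with $f=t+g+1$ the total number of faces (the bounded ones together with the unbounded face), gives $t = e-n+1-g$. Since Theorem~\ref{thm1} states $e-g \le 3n-\sqrt{12n-3}$, substituting yields $t \le 2n+1-\sqrt{12n-3}$ at once. Thus the connected case is immediate, and the real work is to transfer this to a possibly disconnected $G$, where the same computation only produces the weaker bound $t \le 2n-\sqrt{12n-3}+C$, with $C$ the number of components.

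To handle disconnected graphs I would argue componentwise. Let $G_1,\dots,G_C$ be the connected components, with $n_i$ vertices. The key geometric observation is that every bounded triangular face of $G$ is already a bounded triangular face of the component containing its three bounding edges: a triangular face is empty, so no other component can lie inside it. Hence $t \le \sum_{i=1}^C t_i$, where $t_i$ is the number of bounded triangular faces of $G_i$ regarded on its own. Applying the connected case to each $G_i$ gives $t_i \le 2n_i+1-\sqrt{12n_i-3}$, and summing yields
\[ t \le \sum_{i=1}^C \bigl(2n_i+1-\sqrt{12n_i-3}\bigr) = 2n+C-\sum_{i=1}^C\sqrt{12n_i-3}. \]
It therefore suffices to prove the superadditivity estimate $\sum_{i=1}^C \sqrt{12n_i-3} \ge \sqrt{12n-3}+(C-1)$, which is exactly what is needed to absorb the surplus $C-1$ and recover $t \le 2n+1-\sqrt{12n-3}$.

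This last inequality is the main technical point, and I would reduce it by induction to the two-term case $\sqrt{12a-3}+\sqrt{12b-3}-1 \ge \sqrt{12(a+b)-3}$ for integers $a,b\ge 1$. Setting $u=\sqrt{12a-3}$ and $v=\sqrt{12b-3}$, note that $u,v\ge 3$ and $12(a+b)-3 = u^2+v^2+3$; both sides are nonnegative, so squaring reduces the inequality to $(u-1)(v-1)\ge 2$, which holds since $u,v\ge 3$. Telescoping this two-term bound across the components completes the argument. The only subtlety I anticipate is making the componentwise face-counting fully rigorous---specifically, verifying that passing from $G$ to a standalone component neither destroys a triangular face of $G$ nor requires the $G_i$ to be $2$-connected---but since triangular faces are empty and Theorem~\ref{thm1} already allows arbitrary matchstick graphs, this should cause no real difficulty.
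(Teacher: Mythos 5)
Your proposal is correct, and for connected graphs it is exactly the paper's proof: Euler's formula combined with Theorem~\ref{thm1} gives $f_3=e-n+1-g\leq 2n+1-\sqrt{12n-3}$. The paper's own proof of the corollary stops there, silently using the form $n-e+f=1$ of Euler's formula that holds only for connected graphs, even though Theorem~\ref{thm1} (and the corollary as stated) allows disconnected graphs; your componentwise reduction, resting on the observation that a unit triangular face can contain no part of another component and on the superadditivity estimate $\sqrt{12a-3}+\sqrt{12b-3}\geq\sqrt{12(a+b)-3}+1$ for integers $a,b\geq 1$, correctly closes this gap, and your verification of the two-term inequality via $(u-1)(v-1)\geq 2$ with $u,v\geq 3$ is sound. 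An alternative more in the spirit of the paper would be to reduce to the connected case by translating components into the unbounded face and joining them by bridge edges, as in the proof of Theorem~\ref{thm1}: this keeps $n$ fixed, creates no new bounded faces, and does not decrease the number of triangular faces, so the connected bound transfers directly. Either way the statement holds in full generality.
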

Note that for each $n\geq 1$ there is a matchstick graph on the triangular lattice with $n$ vertices and $2n+1-\lceil\!\sqrt{12n-3}\,\rceil$ triangular faces (Figure~\ref{fig1}).

The above results are proved in Section~\ref{section:proofs}.
In the next section, we establish our terminology and introduce the fundamental tools we'll need: the Euler formula, a double-counting identity, and the isoperimetric inequality.

\section{Plane graphs and matchstick graphs}
A  \emph{plane graph} $G=(V,E)$ is defined to be a drawing of a graph in the plane such that each vertex $v\in V$ is a different point in the plane, and each edge $uv\in E$ is represented by a simple arc joining $u$ and $v$, in such a way that two arcs only intersect in a common endpoint.
The \emph{faces} of a plane graph are the connected components of the complement of the plane graph in the plane.
One of the faces is unbounded.
Throughout this paper, we will denote the number of vertices by $n$, the number of edges by $e$, and the number of bounded faces by $f$.

By the Euler formula, whenever $G$ is connected, we have $n-e+f=1$.
If $G$ is furthermore $2$-connected, then each face is bounded by a cycle.
Denote the number of vertices of the unbounded face by $b$, and the number of bounded faces with exactly $i$ boundary vertices by $f_i$.
We have the following well-known relation.
\begin{lemma}\label{euler_double}
For any $2$-connected plane graph with $n$ vertices, $e$ edges, $b$ boundary vertices, and $f_i$ bounded faces with $i$ vertices, $i\geq 3$, we have $e = 3n-3-b-\sum_{i\geq 4}(i-3)f_i$.
\end{lemma}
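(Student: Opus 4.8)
The plan is to derive the identity from two ingredients: the Euler formula and a double count of edge--face incidences, eliminating the face count $f=\sum_{i\geq 3}f_i$ between them. Before setting these up I would record the two structural facts that $2$-connectivity supplies and that the statement tacitly uses. First, in a $2$-connected plane graph every face --- including the unbounded one --- is bounded by a cycle, so a face with $i$ boundary vertices has exactly $i$ boundary edges; in particular the unbounded face is a $b$-cycle contributing $b$ edges to the boundary count. Second, every edge lies on the boundary of exactly two distinct faces.

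Next I would apply the Euler formula. A $2$-connected graph is connected, so $n-e+f=1$ with $f=\sum_{i\geq 3}f_i$ the total number of bounded faces, giving
\[
\sum_{i\geq 3}f_i=e-n+1.
\]
Then I would set up the double count: summing over all faces the number of boundary edges of each face counts every edge exactly twice (by the second structural fact), so
\[
2e=b+\sum_{i\geq 3}i\,f_i,
\]
where $b$ accounts for the unbounded face and $i\,f_i$ for the $f_i$ bounded $i$-gons.

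Finally I would combine the two identities. Since the $i=3$ term vanishes, $\sum_{i\geq 4}(i-3)f_i=\sum_{i\geq 3}(i-3)f_i=\sum_{i\geq 3}i\,f_i-3\sum_{i\geq 3}f_i$, and substituting the two displayed equations gives $(2e-b)-3(e-n+1)=3n-3-b-e$. Rearranging yields the claimed formula $e=3n-3-b-\sum_{i\geq 4}(i-3)f_i$. There is no genuine obstacle here; the only point demanding care is the appeal to $2$-connectivity, which is exactly what guarantees that each edge borders two faces and that the outer boundary is a single $b$-cycle --- both of which can fail for graphs that are merely connected.
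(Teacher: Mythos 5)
Your proof is correct and follows essentially the same route as the paper's: the double count $2e=b+\sum_{i\geq 3}if_i$ (the paper counts face vertices rather than face edges, which is the same number since each face is bounded by a cycle) combined with the Euler formula to eliminate $f=\sum_{i\geq 3}f_i$. Your explicit remarks on where $2$-connectivity is used are a welcome addition but do not change the argument.
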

\begin{proof}
If we add up the number of vertices of each face, including the unbounded face, we count each edge twice, thus obtaining $2e=b+\sum_{i\geq 3}if_i$.
By Euler's formula, $3e=3n+3f-3$.
Subtracting these two identities, we obtain the result.
\end{proof}

A \emph{matchstick graph} is a plane graph in which each edge is represented by a straight-line segment of unit length.
A matchstick graph is called a \emph{penny graph} if the distance between any two vertices is at least $1$, and there is an edge between all pairs of vertices at distance $1$.

In the proof of Theorem~\ref{thm3} we will need the following consequence of the isoperimetric inequality that asserts that among all simple closed curves in the plane of a fixed length, the circle is the unique curve that encloses the largest area \cite{blasjo}.
\begin{lemma}\label{isoperimetric}
Let $G$ be a $2$-connected matchstick graph with $b$ vertices on the outer boundary and $f_3$ bounded triangular faces.
Then $b^2 > \pi\sqrt{3}f_3$.
\end{lemma}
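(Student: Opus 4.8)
The plan is to compare the area enclosed by the outer boundary of $G$ with the combined area of its triangular faces. Since $G$ is $2$-connected, its outer face is bounded by a cycle; as this cycle passes through all $b$ boundary vertices and every edge of a matchstick graph is a straight unit segment, the outer boundary is a simple closed polygonal curve of length exactly $b$. Let $A$ denote the area of the bounded region that it encloses. By the isoperimetric inequality quoted above, a simple closed curve of length $b$ encloses an area of at most $b^2/(4\pi)$, with equality attained only by a circle. A polygon built from unit segments is never a circle, so we obtain the strict inequality $A < b^2/(4\pi)$.

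Next I would bound $A$ from below using the triangular faces. Each bounded triangular face is bounded by a $3$-cycle of unit edges, hence is an equilateral triangle of side $1$, whose area equals $\sqrt{3}/4$. The bounded faces of $G$ have pairwise disjoint interiors and all lie inside the region enclosed by the outer boundary, so the total area of the $f_3$ triangular faces cannot exceed $A$; that is, $\tfrac{\sqrt{3}}{4}f_3 \leq A$.

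Combining the two estimates gives $\tfrac{\sqrt{3}}{4}f_3 \leq A < \tfrac{b^2}{4\pi}$, and multiplying through by $4\pi$ yields $\pi\sqrt{3}\,f_3 < b^2$, which is precisely the claimed bound.

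The argument is essentially routine, and I do not expect a serious obstacle; the only points that genuinely require care are the two geometric facts on which it rests. First, one must use $2$-connectedness to guarantee that the outer boundary really is a \emph{simple} closed curve, so that the isoperimetric inequality applies and its length is controlled exactly by the vertex count $b$. Second, the strictness in the isoperimetric inequality — arising from the impossibility of a unit-edge polygon being a circle — is exactly what upgrades the area comparison to the strict inequality $b^2 > \pi\sqrt{3}\,f_3$ demanded by the statement, rather than a mere $\geq$.
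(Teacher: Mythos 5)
Your proof is correct and follows essentially the same route as the paper's: bound the enclosed area below by the $f_3$ unit equilateral triangles ($A \geq \frac{\sqrt{3}}{4}f_3$) and above via the isoperimetric inequality, with strictness coming from the fact that a polygon is not a circle. No issues.
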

\begin{proof}
The polygon bounding the unbounded face has $b$ edges and encloses $f_3$ equilateral triangles of unit side length.
Each of these triangles has area $\sqrt{3}/4$.
Thus the polygon has area $A\geq \frac{\sqrt{3}}{4}f_3$ and perimeter $b$.
By the isoperimetric inequality, any region of area $A$ bounded by a simple closed curve of length $b$ satisfies $b^2\geq 4\pi A$, with equality only if the curve is a circle.
It follows that $b^2 > 4\pi A\geq \pi\sqrt{3}f_3$.
\end{proof}

\section{Proofs}\label{section:proofs}
We will repeatedly use the following inequality involving sums of square roots.
\begin{lemma}\label{sqrt}
Let $\alpha,\beta,\gamma,\delta$ be non-negative real numbers with $\beta\leq\alpha\leq\gamma$ and $\alpha+\delta=\beta+\gamma$.
Then \[ \sqrt{\beta}+\sqrt{\gamma}\leq \sqrt{\alpha}+\sqrt{\delta},\]
with equality iff $\alpha\in\{\beta,\gamma\}$.
\end{lemma}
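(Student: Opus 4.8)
The plan is to reduce the claimed inequality to a statement about products, which is easier to control. Since all four quantities are non-negative, the inequality $\sqrt{\beta}+\sqrt{\gamma}\leq\sqrt{\alpha}+\sqrt{\delta}$ holds if and only if it holds after squaring both sides. Squaring gives $\beta+\gamma+2\sqrt{\beta\gamma}\leq\alpha+\delta+2\sqrt{\alpha\delta}$, and the hypothesis $\alpha+\delta=\beta+\gamma$ cancels the linear terms, so the inequality becomes simply $\beta\gamma\leq\alpha\delta$. Because squaring is strictly monotone on the non-negative reals, equality in the original inequality is equivalent to equality here. Thus it suffices to prove $\alpha\delta\geq\beta\gamma$, with equality iff $\alpha\in\{\beta,\gamma\}$.

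To do this I would set $s:=\alpha+\delta=\beta+\gamma$ and eliminate $\delta=s-\alpha$ and $\gamma=s-\beta$. A short computation then factors the difference as
\[ \alpha\delta-\beta\gamma=\alpha(s-\alpha)-\beta(s-\beta)=(\alpha-\beta)\bigl(s-\alpha-\beta\bigr)=(\alpha-\beta)(\gamma-\alpha). \]
The two factors are exactly the quantities pinned down by the hypothesis $\beta\leq\alpha\leq\gamma$: both $\alpha-\beta$ and $\gamma-\alpha$ are non-negative, so their product is non-negative and $\alpha\delta\geq\beta\gamma$ follows. The product vanishes precisely when one of the factors is zero, i.e.\ when $\alpha=\beta$ or $\alpha=\gamma$, which is the asserted equality condition.

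Conceptually this is nothing but the strict concavity of $t\mapsto\sqrt{t}$: among pairs of non-negative reals with a fixed sum $s$, the sum of square roots decreases as the pair moves apart, and the hypotheses place $\{\alpha,\delta\}$ inside the interval spanned by $\{\beta,\gamma\}$ (one checks that $\delta=s-\alpha$ also lies in $[\beta,\gamma]$). I prefer the algebraic route because it makes the equality analysis completely transparent and avoids invoking differentiability. The only point that needs care is the reduction step, which is really just bookkeeping: one must note that both sides of the original inequality are non-negative, so that squaring is an \emph{equivalence} rather than a one-way implication, and that the cross terms cancel cleanly thanks to the sum constraint. I do not expect any genuine obstacle beyond this.
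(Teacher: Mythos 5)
Your proof is correct and follows essentially the same route as the paper: both arguments square the sum of square roots, use the constraint $\alpha+\delta=\beta+\gamma$ to cancel the linear terms, and reduce everything to the identity $\alpha\delta-\beta\gamma=(\alpha-\beta)(\gamma-\alpha)\geq 0$. The only difference is cosmetic --- the paper states this identity directly while you derive it by eliminating $\delta$ and $\gamma$, and you spell out the equality analysis slightly more explicitly.
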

\begin{proof}
From
\[ 0\leq (\alpha-\beta)(\gamma-\alpha) = \alpha\delta-\beta\gamma\]
we obtain \[ (\sqrt{\beta}+\sqrt{\gamma})^2 = \beta+\gamma+2\sqrt{\beta\gamma}\leq \alpha+\delta+2\sqrt{\alpha\delta}=(\sqrt{\alpha}+\sqrt{\delta})^2.\qedhere\]
\end{proof}

\begin{lemma}\label{sqrt2}
If $n,n_1,n_2$ are integers such that $n_1,n_2\geq 3$, $n_1+n_2=n+2$, then
\[\lfloor 3n-\sqrt{12n-3}\rfloor + 1\geq \lfloor 3n_1-\sqrt{12n_1-3}\rfloor+\lfloor 3n_2-\sqrt{12n_2-3}\rfloor.\]
\end{lemma}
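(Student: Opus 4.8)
The plan is to first convert the floors into ceilings and reduce to a single clean inequality, then isolate the one place where integrality is genuinely needed. Since $3n$, $3n_1$, $3n_2$ are integers, for any real $x$ we have $\lfloor 3k-x\rfloor = 3k-\lceil x\rceil$, so each floor term equals $3k-\lceil\sqrt{12k-3}\rceil$. Substituting $3n=3n_1+3n_2-6$ and cancelling, the asserted inequality is equivalent to
\[\lceil\sqrt{12n_1-3}\rceil+\lceil\sqrt{12n_2-3}\rceil\geq\lceil\sqrt{12n-3}\rceil+5.\]
Write $u=\sqrt{12n_1-3}$, $v=\sqrt{12n_2-3}$, $w=\sqrt{12n-3}$; note that $u^2+v^2=12n+18$ and $w^2=u^2+v^2-21$.

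The tempting move is to drop the ceilings, but the continuous inequality $u+v\geq w+5$ is actually false for the smallest splits (e.g.\ $n_1=n_2=3$ gives $u+v-w\approx 4.78$), so integrality cannot be avoided. The lever is the elementary estimate $\lceil u\rceil+\lceil v\rceil-\lceil w\rceil> u+v-w-1$, coming from $\lceil u\rceil\geq u$, $\lceil v\rceil\geq v$ and $\lceil w\rceil< w+1$. Because the left-hand side is an integer, it will suffice to prove $u+v-w\geq 5$: then the integer $\lceil u\rceil+\lceil v\rceil-\lceil w\rceil$ exceeds $4$ and is therefore at least $5$.

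To bound $u+v$ from below I would invoke Lemma~\ref{sqrt} to push the two summands as far apart as the constraints allow. Taking $\alpha=12n_1-3$, $\delta=12n_2-3$, $\beta=33$ and $\gamma=12n-15$, one checks $\alpha+\delta=\beta+\gamma=12n+18$ and, using $n_1,n_2\geq 3$, the ordering $\beta\leq\alpha\leq\gamma$. Hence
\[u+v=\sqrt{\alpha}+\sqrt{\delta}\geq\sqrt{\beta}+\sqrt{\gamma}=\sqrt{33}+\sqrt{12n-15},\]
so that $u+v-w\geq\psi(n):=\sqrt{33}-\dfrac{12}{\sqrt{12n-15}+\sqrt{12n-3}}$ after rationalising. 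This $\psi$ depends only on $n$ and is increasing in $n$, so I would simply locate the threshold: a direct estimate shows $\psi(n)\geq 5$ exactly from $n\geq 7$ onward (with $\psi(6)$ falling just short of $5$).

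It remains to treat $n\in\{4,5,6\}$, which is the only genuinely case-based part and the main obstacle, since here the continuous gap dips below $5$ and one must rely on the exact ceiling values rather than on $u+v-w$. But each such $n$ admits only one or two admissible splits — $(3,3)$ for $n=4$, $(3,4)$ for $n=5$, and $(3,5),(4,4)$ for $n=6$ — and in every case the displayed ceiling inequality holds with equality (for instance $\lceil\sqrt{33}\rceil+\lceil\sqrt{33}\rceil=12=\lceil\sqrt{45}\rceil+5$). Verifying these finitely many instances completes the argument.
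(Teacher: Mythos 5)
Your proof is correct, and while it rests on the same two pillars as the paper's argument---rewriting the floors as $3k-\lceil\sqrt{12k-3}\rceil$ to reduce everything to $\lceil\sqrt{12n_1-3}\rceil+\lceil\sqrt{12n_2-3}\rceil\geq\lceil\sqrt{12n-3}\rceil+5$, and then invoking Lemma~\ref{sqrt}---you organize the case analysis quite differently. You apply Lemma~\ref{sqrt} once, pushing all the way to the extreme split $(\beta,\gamma)=(33,\,12n-15)$, which gives a lower bound $u+v-w\geq\psi(n)$ depending only on $n$; since $\psi$ is increasing and $\psi(7)>5$, the ceiling-free inequality $u+v\geq w+5$ handles every admissible split for every $n\geq 7$, and only the four concrete triples with $n\in\{4,5,6\}$ remain to be checked by hand (all giving equality). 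The paper instead splits on the smaller part $n_1$: for $n_1\geq 6$ it applies Lemma~\ref{sqrt} twice with the less extreme target $(69,\,12n-51)$, where the continuous inequality already holds, and for each of $n_1=3,4,5$ it keeps the ceiling on the $\sqrt{12n_1-3}$ term (rounding $\sqrt{33},\sqrt{45},\sqrt{57}$ up to $6,7,8$) and applies Lemma~\ref{sqrt} once more, so each exceptional case is an infinite family in $n$. Your route buys a genuinely finite exceptional set at the price of a sharper threshold computation ($\psi(6)<5<\psi(7)$); the paper's route avoids checking individual triples but needs three extra parametrized cases. Both are complete. The only thing you should spell out is the numerical estimate behind ``$\psi(n)\geq 5$ from $n\geq 7$ onward'': for instance $\sqrt{69}+9>17.3$ and $\sqrt{33}>5.74$ give $\psi(7)>5.74-12/17.3>5.05$, which plays the same load-bearing role as the paper's check that $\sqrt{69}+\sqrt{33}-9>5$.
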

\begin{proof}
The required inequality is equivalent to
\begin{equation}\label{s0}
\sqrt{12n-3}+5\leq\lceil\!\sqrt{12n_1-3}\,\rceil+\lceil\!\sqrt{12n_2-3}\,\rceil.
\end{equation}
Without loss of generality, $n_2\geq n_1$.
If $n_1\geq 6$, then we set $\alpha=12n_1-3$, $\beta=69$, $\gamma=12n-51$, $\delta=12n_2-3$ in Lemma~\ref{sqrt} and obtain
\begin{equation}\label{s1}
\sqrt{69}+\sqrt{12n-51}\leq\sqrt{12n_1-3}+\sqrt{12n_2-3}.
\end{equation}
Apply Lemma~\ref{sqrt} again with $\alpha=12n-51$, $\beta=33$, $\gamma=12n-3$, $\delta=81$ (since $n\geq 7$), we get
\begin{equation}\label{s2}
\sqrt{12n-3}+\sqrt{33}\leq\sqrt{12n-51}+9.
\end{equation}
It follows from \eqref{s1} and \eqref{s2} that \[\sqrt{12n-3}+\sqrt{69}+\sqrt{33}-9\leq\sqrt{12n_1-3}+\sqrt{12n_2-3}.\]
Finally, a calculation shows that $\sqrt{69}+\sqrt{33}-9>5$, and \eqref{s0} follows.

For the remaining cases $n_1=3,4,5$, we need to round $\sqrt{12n_1-3}$ up to obtain \eqref{s0}.
For instance, if $n_1=3$, then we need to show $\sqrt{12n-3}+5\leq 6+\sqrt{12n-15}$, which follows from Lemma~\ref{sqrt} by setting $\alpha=12n-15$, $\beta=33$, $\gamma=12n-3$, $\delta=45$ (since $n\geq 4$), and noting that $\sqrt{45}-\sqrt{33} < 1$.
Similarly, when $n_1=4$ then $n\geq 5$, and we obtain $\sqrt{12n-3}+5\leq 7+\sqrt{12n-27}$ by setting $\alpha=12n-27$, $\beta=25$, $\gamma=12n-3$, $\delta=49$,
and when $n_1=5$ then $n\geq 6$, and $\sqrt{12n-3}+5\leq 8+\sqrt{12n-39}$ follows by setting $\alpha=12n-39$, $\beta=25$, $\gamma=12n-3$, $\delta=61$.
\end{proof}

\begin{proof}[Proof of Theorem~\ref{thm1}]
We use induction on the number of vertices $n\geq 1$.
The theorem clearly holds when $n=1$ or $n=2$.
We now assume that $n\geq 3$ and that the theorem holds for all smaller values of $n$ as induction hypothesis.

If the matchstick graph $G$ is not connected, let $G'$ be a connected component of $G$.
If $G'$ is in a bounded face of $G-G'$, we can move $G'$ to the unbounded face of $G-G'$.
Note that this does not change the number of non-triangular faces, unless $G'$ was originally inside a triangular face of $G-G'$.
However, then $G'$ cannot have any edges, and we are done by applying induction to $G-G'$.
Thus we may assume that neither $G'$ nor $G-G'$ lies in a bounded face of the other graph.
Then it is easy to move $G'$ so that one of its vertices is at distance $1$ from a vertex of $G-G'$, while keeping $G'$ and $G-G'$ disjoint.
This creates a new edge that joins two connected components of $G$.
This process can be repeated until $G$ is connected, without decreasing the number of edges.

We now assume without loss of generality that $G$ is connected.
If $G$ is not $2$-connected, then there is a vertex $v$ such that $G-v$ is disconnected (Figure~\ref{fig:cutvertex}).
We can then decompose $G$ into two induced subgraphs $G_1$ and $G_2$ having only $v$ in common.
If $G_i$ has $n_i$ vertices, $e_i$ edges, and $g_i$ non-triangular faces ($i=1,2$), then $n_1,n_2\geq 2$, $n_1+n_2=n+1$, and $e_1+e_2=e$.
It is clear that $g_1+g_2=g$ if $G_1$ lies in the unbounded face of $G_2$ and $G_2$ in the unbounded face of $G_1$.
Suppose that $G_1$ (say) lies in a bounded face of $G_2$.
Then this face cannot be a triangle, as then $G_1$ would not have any edges, contradicting the connectedness of $G$.
It follows that $g_1+g_2=g$ in this case too.
By induction,
\begin{align*}
e=e_1+e_2 &\leq 3n_1-\sqrt{12n_1-3}+g_1+3n_2-\sqrt{12n_2-3}+g_2\\
&=3n+3-\sqrt{12n_1-3}-\sqrt{12n_2-3}+g\\
&\leq 3n-\sqrt{12n-3}+g,
\end{align*}
by Lemma~\ref{sqrt} with $\alpha=12n_1-3$, $\beta=9$, $\gamma=12n-3$, $\delta=12n_2-3$.

For the remainder of the proof we assume without loss of generality that $G$ is $2$-connected.
In particular, the boundary of the unbounded face is a cycle.
\begin{figure}
        \centering
        \subfigure[Cut vertex $v$]{\scalebox{1.5}{\begin{tikzpicture}[line cap=round,line join=round,x=1cm,y=1cm]
\clip(-6.3,-0.8) rectangle (-3.8,0);
\draw [rotate around={-175.1999069520851:(-4.603117370568568,-0.3857946016542552)},line width=0.8pt, fill=black!10!white] (-4.603117370568568,-0.3857946016542552) ellipse (0.36221476214560616cm and 0.34609338541412427cm);
\draw [rotate around={169.5898401359414:(-5.313332797030034,-0.3716194315463852)},line width=0.8pt, fill=black!10!white] (-5.313332797030034,-0.3716194315463852) ellipse (0.34898325666851265cm and 0.33296679724394357cm);
\begin{scriptsize}
\draw [fill=black] (-4.964882810742803,-0.3734463178698877) circle (1pt);
\draw[color=black] (-4.8,-0.4) node {$v$};
\end{scriptsize}
\end{tikzpicture}}\label{fig:cutvertex}}
        \subfigure[Chord $uv$]{\scalebox{1.5}{\begin{tikzpicture}[line cap=round,line join=round,x=1cm,y=1cm]
\clip(-3.6,-0.8) rectangle (-1.7,0.25);
\draw [shift={(-2.272925671792888,-0.2432242154702259)},line width=0.8pt, fill=black!10!white]  plot[domain=-2.519272738886633:2.611064952584812,variable=\t]({1*0.4110166517606151*cos(\t r)+0*0.4110166517606151*sin(\t r)},{0*0.4110166517606151*cos(\t r)+1*0.4110166517606151*sin(\t r)});
\draw [shift={(-2.9078719897519694,-0.2723862585528154)},line width=0.8pt, fill=black!10!white]  plot[domain=0.7019380475791273:5.673039473298639,variable=\t]({1*0.3672482919786187*cos(\t r)+0*0.3672482919786187*sin(\t r)},{0*0.3672482919786187*cos(\t r)+1*0.3672482919786187*sin(\t r)});
\draw [line width=0.8pt] (-2.6274440485619386,-0.03525448556333732)-- (-2.6068883395775253,-0.482814750656144);
\begin{scriptsize}
\draw [fill=black] (-2.6274440485619386,-0.03525448556333732) circle (1pt);
\draw[color=black] (-2.67,0.18) node {$u$};
\draw [fill=black] (-2.6068883395775253,-0.482814750656144) circle (1pt);
\draw[color=black] (-2.5971538163251116,-0.7065277820911808) node {$v$};
\end{scriptsize}
\end{tikzpicture}}\label{fig:chord}}
        \subfigure[Face with non-neighbouring vertices $u$ and $v$ on the boundary]{\scalebox{1.5}{\begin{tikzpicture}[line cap=round,line join=round,x=1cm,y=1cm]
\clip(-1.9,-0.9) rectangle (1,0.5);
\draw [shift={(-0.7205808947686879,-0.3011250597081236)},line width=0.8pt, fill=black!10!white]  plot[domain=1.3605899681101508:5.440137122996831,variable=\t]({1*0.45116460698132405*cos(\t r)+0*0.45116460698132405*sin(\t r)},{0*0.45116460698132405*cos(\t r)+1*0.45116460698132405*sin(\t r)});
\draw [shift={(-0.21028666198043033,-0.1660471745582904)},line width=0.8pt, fill=black!10!white]  plot[domain=-1.9897779469522128:2.507319730879609,variable=\t]({1*0.5166381362906737*cos(\t r)+0*0.5166381362906737*sin(\t r)},{0*0.5166381362906737*cos(\t r)+1*0.5166381362906737*sin(\t r)});
\draw [line width=0.8pt, fill=white] (-0.6264401116114577,0.14010845617564968)-- (-0.6340686066217696,-0.29471575941211225)--
(-0.42047074633304354,-0.6379980348761357)--
(-0.10770245091026698,-0.34811522448429355)--
(-0.229758371075253,0.025681031020975492)--cycle;
\begin{scriptsize}
\draw [fill=black] (-0.6264401116114577,0.14010845617564968) circle (1pt);
\draw[color=black] (-0.7,0.31) node {$u$};
\draw [fill=black] (-0.6340686066217696,-0.29471575941211225) circle (1pt);
\draw [fill=black] (-0.42047074633304354,-0.6379980348761357) circle (1pt);
\draw[color=black] (-0.3817348741799039,-0.8142223782341946) node {$v$};
\draw [fill=black] (-0.229758371075253,0.025681031020975492) circle (1pt);
\draw [fill=black] (-0.10770245091026698,-0.34811522448429355) circle (1pt);
\end{scriptsize}
\end{tikzpicture}}\label{fig:face}}
        \caption{Three cases in the proof of Theorem~\ref{thm1}}
        \label{fig:cut-chord-face}
\end{figure}
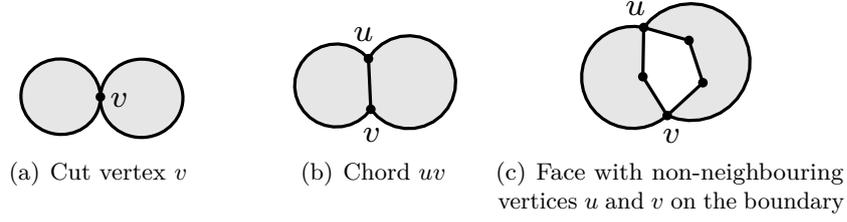
Suppose that the boundary cycle has a chord (Figure~\ref{fig:chord}).
Let $G_1$ and $G_2$ be two induced subgraphs covering $G$ such that the chord is their only common edge and the endpoints of the chord their only two common vertices.
Using the same notation as before, with $G_i$ having $n_i$ vertices, $e_i$ edges, and $g_i$ non-triangular faces ($i=1,2$), we now have $n_1,n_2\geq 3$, $n_1+n_2=n+2$, $e_1+e_2=e+1$, and $g_1+g_2=g$.
Again we use induction on these two subgraphs to obtain
\begin{align*}
e=e_1+e_2 -1 &\leq \lfloor 3n_1-\sqrt{12n_1-3}\rfloor + g_1 + \lfloor 3n_2-\sqrt{12n_2-3}\rfloor + g_2 -1\\
&\leq 3n-\sqrt{12n-3}+g\quad\text{by Lemma~\ref{sqrt2}.}
\end{align*}

From now on we assume without loss of generality that the boundary cycle does not have a chord.

We next show that
if a bounded non-triangular face shares more than one vertex with the unbounded face, then we can assume without loss of generality that only two of its vertices are on the boundary cycle, they are adjacent, and at most one of the two interior angles of the face at these two vertices is smaller than $60^\circ$.

First suppose that a non-triangular bounded face has two non-adjacent vertices on the boundary (Figure~\ref{fig:face}).
We decompose $G$ into two induced subgraphs $G_1$ and $G_2$ such they only have these two vertices in common, with no common edge.
Again using the notation with $G_i$ having $n_i$ vertices, $e_i$ edges, and $g_i$ non-triangular faces ($i=1,2$), we have $n_1,n_2\geq 3$, $n_1+n_2=n+2$, $e_1+e_2=e$, and $g_1+g_2=g-1$.
By induction,
\begin{align*}
e=e_1+e_2 &\leq \lfloor 3n_1-\sqrt{12n_1-3}\rfloor + g_1 + \lfloor 3n_2-\sqrt{12n_2-3}\rfloor + g_2\\
&\leq 3n-\sqrt{12n-3}+g \quad\text{by Lemma~\ref{sqrt2}.}
\end{align*}
From now on we assume without loss of generality that whenever a non-triangular bounded face has more than one vertex on the boundary, it has only two vertices on the boundary and they are adjacent.
If both angles at these boundary vertices are $< 60^\circ$, then two of the edges of the face will be forced to intersect.
It follows that at most one angle of a non-triangular face is less than $60^\circ$.

Let $g_b$ denote the number of non-triangular bounded faces that share a vertex with the unbounded face.
At each vertex of of the boundary cycle of degree $i$ there are $i-1$ angles interior to bounded faces.
If we denote the number of boundary vertices of degree $i$ by $b_i$, then
the total number of boundary vertices is $b=\sum_{i\geq 2}b_i$, the number of angles is $\sum_{i\geq 2}(i-1)b_i$, and the sum of these angles equals $180^\circ(b-2)$.
By the previous paragraph we have that at most $g_b$ of these angles are smaller than $60^\circ$.
Therefore, $180^\circ(b-2)\geq 60^\circ(\sum_{i\geq 2}(i-1)b_i-g_b)$, hence
\begin{equation}\label{two}
\sum_{i\geq 2}(i-1)b_i -g_b\leq 3b-6.
\end{equation}
Since the boundary cycle has no chord, when we remove the $b$ vertices on the boundary together with their incident edges, we remove exactly $\sum_{i\geq 2}(i-1)b_i$ edges.
We also remove $g_b$ non-triangular faces and $b$ vertices.
Without loss of generality, $n-b>0$, otherwise $G$ is just the boundary cycle, hence $e=n\leq 3n-\sqrt{12n-3}+g$ since $n\geq 3$.
We use induction on the remaining graph of $n-b$ vertices to obtain
\[e-\sum_{i\geq 2}(i-1)b_i \leq 3(n-b)-\sqrt{12(n-b)-3}+g-g_b,\] hence
\begin{align*}
e&\leq 3(n-b)-\sqrt{12(n-b)-3}+g-g_b+\sum_{i\geq 2}(i-1)b_i\\
&\leq 3(n-b)-\sqrt{12(n-b)-3}+g+3b-6\quad\text{by \eqref{two}}\\
&= 3n-\sqrt{12(n-b)-3}+g-6,
\end{align*}
and in order to conclude that $e\leq 3n-\sqrt{12n-3}+g$, we need to show that
\begin{equation}\label{three}
\sqrt{12n-3}\leq\sqrt{12(n-b)-3}+6.
\end{equation}
If $b\leq\sqrt{12n-3}-3$, then
\[ \sqrt{12(n-b)-3} \geq \sqrt{12\bigl(n-\sqrt{12n-3}+3\bigr)-3}
=\sqrt{\bigl(\sqrt{12n-3}-6\bigr)^2},\]
which gives \eqref{three}, since $n > b\geq 3$, hence $n\geq 4$ and $\sqrt{12n-3} - 6 > 0$.
Otherwise, $b > \sqrt{12n-3}-3$, and by Lemma~\ref{euler_double}, $e\leq 3n-b-3 < 3n-\sqrt{12n-3}$.
\end{proof}

\begin{remark}
In the last step of the above proof, there is some slack, as we actually have $e\leq 3n-b-3-g$ from Lemma~\ref{euler_double}.
By taking this into account, it is possible to prove the slightly stronger inequality $e\leq3n-\sqrt{12(n+2g)-3}+g$.
However, there are then more boundary cases to deal with, and as this is not much of an improvement, we settled for the weaker inequality in Theorem~\ref{thm1}.
\end{remark}

\begin{remark}
When we removed the outer boundary cycle in the above proof, we needed this cycle not to have a chord in order to count the number of edges that are removed.
This point is overlooked in Harborth's original proof \cite{harborth} on which this proof is based.
\end{remark}

\begin{proof}[Proof of Corollary~\ref{cor1}]
By the Euler formula, $n-e+f_3+g=1$, and by Theorem~\ref{thm1}, $e-g\leq 3n-\sqrt{12n-3}$.
It follows that $f_3\leq 2n+1-\sqrt{12n-3}$.
\end{proof}

\begin{proof}[Proof of Theorem~\ref{thm3}]
As in the proof of Theorem~\ref{thm1} we use induction on $n$.
The theorem is easy to verify for $n=1, 2$, and as in the proof of Theorem~\ref{thm1}, we can assume that $G$ is connected.

To show that we can furthermore assume that $G$ is $2$-connected, we also proceed as in the proof of Theorem~\ref{thm1}.
If $G$ is not $2$-connected, we can decompose $G$ into two induced subgraphs $G_1$ and $G_2$ having only a single vertex in common.
Let $G_i$ have $n_i$ vertices and $e_i$ edges $(i=1,2)$.
Then $n_1,n_2\geq 2$, $n_1 + n_2 = n+1$ and $e_1 + e_2 = e$.
By induction and using the shorthand  $c=\frac12\Bigl(\!\sqrt{12}+\sqrt{\rule[-0.05em]{0pt}{0.9em}\smash{2\pi\sqrt{\smash[b]{3}}}}\Bigr)$
we have
\begin{align*}
e=e_1+e_2 &\leq 3n_1-c\sqrt{n_1-1/4}+3n_2-c\sqrt{n_2-1/4}\\
&=3n+3-c\sqrt{n_1-1/4}-c\sqrt{n_2-1/4}.
\end{align*}
To conclude that $e \leq 3n - c\sqrt{n-1/4}$ we need to show that
\begin{equation*}
\frac{3}{c} + \sqrt{n-1/4} \leq \sqrt{n_1 - 1/4} + \sqrt{n_2 -1/4}.
\end{equation*}
Since $n_i \geq 2$, we can apply Lemma~\ref{sqrt} with $\alpha=n_1-1/4$, $\beta=3/2$, $\gamma=n-1$, $\delta=n_2-1/4$ to get
\begin{equation*}
    \sqrt{3/2} + \sqrt{n-1} \leq \sqrt{n_1-1/4}+\sqrt{n_2-1/4}.
\end{equation*}
Since $n\geq 3$, we can again apply Lemma~\ref{sqrt} with $\alpha=n-1$, $\beta=5/4$, $\gamma=n-1/4$, $\delta=2$, to obtain
\begin{equation*}
   \sqrt{5}/2 + \sqrt{n-1/4} \leq \sqrt{n-1}+\sqrt{2}.
\end{equation*}
Combining these two equations together gives
\begin{equation*}
    \sqrt{n-1/4} + \sqrt{5}/2 -\sqrt{2} + \sqrt{3/2} \leq \sqrt{n_1-1/4}+\sqrt{n_2-1/4}
\end{equation*}
which shows the required inequality since $\sqrt{5}/2 -\sqrt{2} + \sqrt{3/2} > 3/c$.

We now assume that $G$ is 2-connected.
Thus the unbounded face is bounded by a cycle with $b$ edges.
As before, denote the number of bounded faces with $i$ vertices by $f_i$ ($i\geq 3$), and the number of non-triangular faces by $g$.
By Lemma~\ref{euler_double}, noting that $g =\sum_{i\geq 4} f_i \leq \sum_{i \geq 4}(i-3)f_i$, we have
\begin{equation}\label{eqn:e<g}
e \leq 3n -3 -b -g.
\end{equation}
Suppose for the sake of contradiction that
\begin{equation}\label{eqn:assum}
e > 3n - c\sqrt{n-1/4}.
\end{equation}
Then \eqref{eqn:e<g} and \eqref{eqn:assum} give the following upper bound for $g$:
\begin{align*}
    g &< c\sqrt{n-1/4} -3 -b. \stepcounter{equation} \tag{\theequation} \label{eqn:g<}
\end{align*}
We obtain the following lower bound for $f_3$ from \eqref{eqn:assum}, \eqref{eqn:g<}, and the Euler formula:
\begin{align*}
    f_3&=e-n-g+1 \\
    &> 3n - c\sqrt{n-1/4} -n -c\sqrt{n-1/4} +3 +b +1 \\
    &= 2n - 2c\sqrt{n-1/4} +4 +b.
\end{align*}
Substitute this into the inequality $b^2 > \pi\sqrt{3}f_3$ from Lemma~\ref{isoperimetric} to obtain
\begin{align*}
    b^2 -\pi\sqrt{3}b &> 2\pi\sqrt{3}\left(n-c\sqrt{n-1/4}+2\right).
\end{align*}
By completing the square,
\begin{align*}
    \left(b-\frac{\pi\sqrt{3}}{2} \right)^2 &> \frac{3\pi^2}{4}+2\pi\sqrt{3}\left(n-c\sqrt{n-1/4}+2\right),
\end{align*}
we get the following lower bound for $b$:
\begin{align*}
    b &> \frac{\pi \sqrt{3}}{2} + \sqrt{\frac{3\pi^2}{4}+ 2\pi\sqrt{3}\left(n - c\sqrt{n-1/4}+2\right)}.
\end{align*}
We would like to deduce from this that $b\geq \sqrt{2\pi\sqrt{3}\left(n-1/4\right)}-3$.
It is sufficient to show the following:
\begin{equation}\label{eq2}
    \sqrt{\frac{3\pi^2}{4}+2\pi\sqrt{3} \left(n-c\sqrt{n-1/4}+2\right)} \geq \sqrt{2\pi\sqrt{3}\left(n-1/4\right)} - \frac{\pi\sqrt{3}}{2}-3.
\end{equation}
Since the left-hand side is non-negative, we can assume without loss of generality that
\begin{equation}\label{eq3}
\sqrt{2\pi\sqrt{3}\left(n-1/4\right)} \geq \frac{\pi\sqrt{3}}{2} + 3.
\end{equation}
Then we can square both sides of \eqref{eq2} and rearrange to obtain the equivalent
\begin{equation*}
    \left(\pi\sqrt{3}+6-c\sqrt{\vphantom{\bigl\{\bigr\}}\smash[t]{2\pi\sqrt{3}}}\right)\sqrt{2\pi\sqrt{3}\left(n-1/4\right)} \geq 9-\frac{3\pi\sqrt{3}}{2}.
\end{equation*}
This follows from \eqref{eq3}, upon checking that $\pi\sqrt{3}+6-c\sqrt{\vphantom{\bigl\{\bigr\}}\smash[t]{2\pi\sqrt{3}}}>0$ and
\[ \left(\!\pi\sqrt{3}+6-c\sqrt{\vphantom{\bigl\{\bigr\}}\smash[t]{2\pi\sqrt{3}}}\,\right)\left(\frac{\pi\sqrt{3}}{2} + 3\right) \geq 9-\frac{3\pi\sqrt{3}}{2}.\]
So we have shown that $b \geq \sqrt{\vphantom{\bigl\{\bigr\}}\smash[t]{2\pi\sqrt{3}\left(n-1/4\right)}}-3$, which, together with \eqref{eqn:e<g} gives
\begin{align*}
e &\leq 3n - \sqrt{\vphantom{\bigl\{\bigr\}}\smash[t]{2\pi\sqrt{3}\left(n-1/4\right)}}-g.
\end{align*}
By Theorem~\ref{thm1} we also have $e\leq 3n-\sqrt{12n-3}+g$.
Adding these two bounds, we obtain $e\leq 3n - c \sqrt{n-1/4}$,
which contradicts the assumption \eqref{eqn:assum}.
Thus the assumption \eqref{eqn:assum} is false and the theorem follows.
\end{proof}

\begin{remark}
Eppstein \cite{eppstein} uses the isoperimetric inequality to find an upper bound of the form $2n-c\sqrt{n}$ for the number of edges in a triangle-free penny graph on $n$ vertices.
To show an upper bound of this form for triangle-free matchstick graphs will need a new idea, as there is no obvious way to bound the area of the bounded faces from below.
\end{remark}

\end{document}